\newtheorem{lemma}{Lemma}
\newtheorem{proposition}[lemma]{Proposition}
\newtheorem{theorem}[lemma]{Theorem}
\newtheorem{question}[lemma]{Question}
\newcommand{\ap}{\mathrm {Ap}}
\newcommand{\Sw} {\mathcal{S}}
\newcommand{\edim}{\mathrm {edim}}
\title{On a conjecture by Wilf about the Frobenius number}
\author{Alessio Moscariello \and Alessio Sammartano}
\subjclass[2010]{Primary: 11D07; Secondary: 11B75, 20M14.}
\keywords{Diophantine Frobenius problem;  coin problem; Wilf's conjecture; numerical semigroup; Ap\'ery set; length inequality; one-dimensional local ring.}
\address[Alessio Moscariello]{Dipartimento di Matematica e Informatica, Universit\`a di Catania, Viale Andrea Doria 6, 95125 Catania, Italy}
\email{alessio.moscariello@studium.unict.it}
\address[Alessio Sammartano]{Department of Mathematics, Purdue University, 150 N. University Street, West Lafayette, IN 47907, USA}
\email{asammart@math.purdue.edu}
\begin{document}

\begin{abstract}
Given   coprime positive integers $a_1 <  \cdots < a_d$, the Frobenius number $F$ is the largest integer which is not representable as a non-negative integer combination of the $a_i$.
Let $g$ denote the number of all non-representable positive integers:
 Wilf conjectured that $d \geq \frac{F +1}{F+1-g}$.
We prove that for every fixed value of $ \lceil \frac{a_1}{d} \rceil $ the conjecture holds for all  values of $a_1$ which are sufficiently large  and are not divisible by a finite set of primes.
We also propose a generalization in the context of one-dimensional local rings  and a question on the equality $d = \frac{F+1}{F+1-g}$.
\end{abstract}

\maketitle

\section*{Introduction}

The Diophantine Frobenius problem or coin problem consists of studying the greatest integer $F$,
 called the Frobenius number,
 that is not representable as a linear combination of given $d$ coprime positive integers $a_1<  \cdots < a_d$ with coefficients in $\mathbb{N}$.
The problem has stimulated much research over the past decades,  due to the applications to several areas of pure and applied mathematics including
 Coding Theory,  Linear Algebra, Combinatorics and Commutative Algebra.
The monograph \cite{RamirezAlfonsin} explores several viewpoints about the problem.

Let $ \Sw = \big\{ \sum_{i=1}^d \lambda_i a_i \, \colon \, \lambda_i \in \mathbb{N} \big\} $
 be the  numerical semigroup generated by the $a_i$.
The condition $\gcd(a_1, \ldots,  a_d)=1$ implies that the set of gaps $\mathbb{N}\setminus \Sw$ is  finite and its cardinality  
$g = | \mathbb{N}\setminus \Sw|$ is known as the genus of $\Sw$. 
In 1978 H. S. Wilf proposed a lower bound for the number of generators of $\Sw$ in terms of the Frobenius number and the genus (cf. \cite{Wilf}): 
\begin{equation}\label{Wilf's Inequality}
d \geq \frac{F+1}{F+1-g}.
\end{equation}
Although the problem has been considered by several authors 
(cf. 
\cite{Barucci}, \cite{BrasAmoros}, \cite{DobbsMatthews}, \cite{FGH}, \cite{Kaplan}, \cite{Sammartano}, \cite{Zhai})
only special cases have been solved and it remains wide open.
The approach we follow in this paper is to study the poset structure of the so-called Ap\'ery set
once the value of  the ratio $\rho =  \lceil \frac{a_1}{d} \rceil $ is fixed. The cases 
$\rho=1$ and $\rho = 2$ were solved in \cite{DobbsMatthews} and \cite{Sammartano} respectively; in this work we focus on the general case.
Our main result is the following:

\begin{theorem}\label{Main Theorem}
For every value of $\rho =  \lceil \frac{a_1}{d} \rceil $ Wilf's inequality (\ref{Wilf's Inequality})
holds if $a_1$ is large enough and the prime factors of $a_1$ are greater than or equal to $\rho$.
\end{theorem}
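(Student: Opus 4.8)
I would start from the equivalent form of Wilf's inequality (\ref{Wilf's Inequality}), namely $(d-1)(F+1)\ge dg$, and translate everything into the Apéry set $W:=\ap(\Sw,n)$ with respect to the multiplicity $n:=a_1$. Writing each nonzero $w\in W$ in its residue class as $w=n k_i + i$ with $0<i<n$ and $k_i\ge1$ (the Kunz coordinates), one has $|W|=n$, $g=\sum_{i=1}^{n-1}k_i$, and $F+1=\max_i\big(n(k_i-1)+i\big)+1$, so that (\ref{Wilf's Inequality}) becomes a purely combinatorial inequality among the integers $k_i$, subject to the Kunz relations $k_i+k_j\ge k_{i+j}$ (resp. $k_i+k_j+1\ge k_{i+j-n}$ when $i+j>n$). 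Here $d-1$ equals the number of \emph{atoms} of the Apéry poset, i.e. the residues $i$ for which $w_i$ is indecomposable. The decisive feature of fixing $\rho=\lceil n/d\rceil$ and letting $n\to\infty$ is that $d\ge n/\rho$: the embedding dimension is a fixed positive fraction of the multiplicity, so $W$ has a large number $d-1\sim n/\rho$ of atoms.

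The mechanism I would exploit is that many atoms force the genus to be small relative to $F$. Setting $K=\max_i k_i$, one has $F+1\le nK$ while, trivially, $g=\sum_i k_i\le (n-1)K$; these two crude bounds alone yield (\ref{Wilf's Inequality}) only in the extreme case $d=n$, so the whole problem is to improve the upper bound on $g$. The improvement must come from a \emph{length inequality}: ordering $W$ by the poset and expanding each element through a minimal factorization into atoms, one bounds the multiset of Kunz coordinates $\{k_i\}$ in terms of the coordinates of the $d-1$ atoms and the combinatorics of how the atoms compose. The target of this step is an estimate of the shape $g=\sum_i k_i \le \tfrac{d-1}{d}(F+1)-(\text{nonnegative error})$, valid once $n$ is large, which is exactly (\ref{Wilf's Inequality}).

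Because only the ratio $\rho$ is fixed, I would next normalize by $n$ and argue that, as $n\to\infty$, the set of admissible rescaled Kunz-coordinate profiles satisfying the constraints---at least $n/\rho$ atoms, the Kunz relations, and the length inequality---is cut out by finitely many linear inequalities depending only on $\rho$. The problem then reduces to verifying (\ref{Wilf's Inequality}) on the finitely many extreme profiles attached to each value of $\rho$; on all but a thin boundary family the inequality should be strict with slack of order $\Theta(n)$, which is precisely why ``$a_1$ large enough'' suffices to absorb the bounded, $\rho$-dependent error terms coming from the integer roundings.

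The boundary family consists of the near-extremal profiles for which the length inequality is essentially an equality; these correspond to Apéry sets whose atoms are almost equally spaced among the residues modulo $n$, i.e. an arithmetic-progression pattern of some common step $t\le\rho$. Such a pattern can be realized by an actual numerical semigroup only under a compatibility between $t$ and $n$, which forces $a_1=n$ to share a common factor with an integer strictly below $\rho$, hence to have a prime factor $<\rho$. Imposing that every prime factor of $a_1$ is at least $\rho$ rules out exactly this family, so for $a_1$ large with this arithmetic property (\ref{Wilf's Inequality}) holds, proving Theorem~\ref{Main Theorem}. I expect the two main obstacles to be, first, establishing the length inequality in a form sharp enough to reach the constant $\tfrac{d-1}{d}$---the Apéry poset is neither graded nor of height bounded in terms of $\rho$, so the telescoping down the chains of atoms must be controlled carefully---and, second, matching the equality cases of the resulting finite linear program precisely to the divisibility of $a_1$ by primes below $\rho$, which is where both hypotheses of the theorem are consumed.
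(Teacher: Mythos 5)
Your proposal is a program outline, not a proof, and its two load-bearing steps are precisely the ones left unestablished. First, the ``length inequality'' you aim for, $g \le \tfrac{d-1}{d}(F+1) - (\text{error})$, \emph{is} Wilf's inequality restated; saying that expanding Ap\'ery elements into atoms ``bounds the multiset of Kunz coordinates'' names a hope, not a mechanism, and you give no argument that the telescoping along chains of atoms produces the constant $\tfrac{d-1}{d}$. Second, the reduction to a finite linear program is unjustified and, as stated, does not work: the Kunz coordinates live in dimension $a_1-1$, which grows with $a_1$, so rescaling by $n=a_1$ does not yield a set ``cut out by finitely many linear inequalities depending only on $\rho$'' --- there is no finite-dimensional limiting object here unless you construct one, and constructing one is itself a substantial open-ended problem (if such a reduction were available for every fixed $\rho$, it would come close to a strategy for the full conjecture). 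Third, your guess about where the prime-factor hypothesis enters --- ruling out arithmetic-progression-like extremal profiles --- is not substantiated and does not match any identifiable step of your own argument; you never prove that the near-equality profiles have that shape, nor that realizing them forces a common factor below $\rho$.

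For contrast, the paper consumes the two hypotheses in completely concrete ways. The primality hypothesis is used only to guarantee that, for $x < \rho$ smaller than every prime factor of $a_1$, the multiples $\omega_1, 2\omega_1, \ldots, x\omega_1$ lie in distinct residue classes modulo $a_1$, whence $\omega_x \le x\omega_1$ and $\lfloor \omega_x/a_1 \rfloor \le x\lfloor \omega_1/a_1\rfloor + x - 1$; this bounds the negative contribution of the indices $j \le \rho$ in the identity $\Delta = \sum_{j=1}^{a_1-1}\epsilon_j(jd-a_1) + (n_Q d - R)$ of Proposition \ref{Proposition Expression Difference}. The hypothesis that $a_1$ is large is used to ensure $a_1 - d \ge \binom{y}{2}+1$ for $y = \tfrac{3\rho^2-\rho-4}{2}$, which by a pigeonhole argument on the $d-1$ atoms of the Ap\'ery poset forces $\omega_{a_1-1} \ge \omega_y + \omega_1$ and yields lower bounds on $n_Q$ and $n_{Q-1}$; a three-case analysis then shows $\Delta > 0$. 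If you want to salvage your approach, the honest next step is to replace your ``target estimate'' with an actual inequality you can prove --- and you will likely find yourself rebuilding something equivalent to the paper's interval-counting quantity $\Delta$ and its floor-function lemmas rather than a finite polyhedral reduction.
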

\noindent
The explicit lower bound for $a_1$ in Theorem \ref{Main Theorem} is $a_1 \geq \frac{\rho (3 \rho^2 - \rho -4) (3 \rho^2 - \rho -2)}{8(\rho-2)}$.

In the first section of the paper we recall some definitions and preliminary results, 
whereas the second section is devoted to proving Theorem \ref{Main Theorem}.
We conclude the paper with a word about  the equality in (\ref{Wilf's Inequality}) and a discussion of the conjecture in the context of Commutative Algebra.

\section{Preliminaries} 

We assume without loss of generality that 
$a_1, \ldots, a_d$ are the minimal generators of $\Sw$, i.e. no proper subset generates the semigroup; 
this implies that $d \leq a_1$.
The Ap\'ery set  is defined as 
$$
\ap(\Sw)=\big\{\omega \in \Sw \ \colon \ \omega - a_1 \not \in \Sw\big\}
$$
and thus it consists of the smallest elements of $\Sw$ in each residue class modulo $a_1$.
It follows that $0 \in \ap(\Sw)$, $|\ap(\Sw)|=a_1$
and that $F+a_1$ is the largest element in $\ap(\Sw)$.
We list the elements  of $\ap(\Sw)$  increasingly setting 
$\ap(\Sw)=\{\omega_0 < \omega_1 < \cdots < \omega_{a_1-1} \}$,
notice that we have $\omega_0=0,\, \omega_1=a_2, \, \omega_{a_1-1}=F+a_1$.
We establish a partial order on  $\mathbb{N}$ by setting $n_1 \preceq n_2$ if there exists $s \in \Sw$ such that $n_1+s=n_2$ and we consider throughout the paper the poset $(\ap(\Sw)\setminus\{0\}, \preceq)$. 
The minimal elements in $(\ap(\Sw)\setminus\{0\}, \preceq)$ are exactly $\{a_2, \ldots, a_d\}$ and if $\tau \in \Sw, \omega \in \ap(\Sw), \, \tau \preceq \omega$ then $\tau \in \ap(\Sw)$.
We refer to  \cite{RosalesGarciaSanchez} for  details on numerical semigroups.

Now we give further definitions in order to introduce a reformulation of Wilf's inequality.
For each $k\in \mathbb{N}$ let $I_k=\big[ka_1, (k+1)a_1-1\big] $  and $n_k=\big|\Sw \cap \big[0, F\big] \cap I_k \big|$. 
We write 
$$
F+1 = Qa_1+R
$$ 
with   $Q, R \in \mathbb{N}$ and  $2\leq R \leq a_1$.
Note that $R \ne 1$ as $F \notin \Sw$ and that $I_Q$ is the  interval containing the Frobenius number.
We define the numbers 
$$
\eta_j  =  \Big|\big\{k \in \mathbb{N}\ \colon \big|I_k \cap \Sw\big|=j\big\}\Big|
   	\qquad \text{and}  \qquad
\epsilon_j  = \Big|\big\{k \in \mathbb{N}\ \colon \big|I_k \cap \Sw\big|=j,\, 0 \leq k \leq Q-1 \big\}\Big|
$$
for each $j \in \big\{1,2,\ldots, a_1-1\big\}$.
In other words, 
$\eta_j$ counts the intervals $I_k$ containing exactly $j$ elements of $\Sw$,
while 
$\epsilon_j$ 
only counts such intervals among the first $Q$.
The two definitions differ slightly, and the numbers $\eta_j$ can be expressed in terms of $\ap(\Sw)$:
\begin{lemma}[\cite{Sammartano},  12, 13]\label{Lemma Properties of Eta and Epsilon}
The following properties hold:
\begin{enumerate}
\item $\epsilon_j = \eta_j-1$ if $j=\big|I_Q \cap \Sw\big|$ and  $\epsilon_j = \eta_j$ otherwise;
\item $\eta_j= \lfloor \frac{\omega_j}{a_1}  \rfloor -  \lfloor \frac{\omega_{j-1}}{a_1}  \rfloor$ for all $1 \le j \le a_1-1$.
\end{enumerate}
\end{lemma}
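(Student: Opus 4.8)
The plan is to reduce both statements to a single elementary observation: an integer belongs to $\Sw$ exactly when it dominates the Apéry element in its residue class. Concretely, for $0 \le r \le a_1 - 1$ let $\omega_{(r)}$ denote the unique element of $\ap(\Sw)$ with $\omega_{(r)} \equiv r \pmod{a_1}$; then the elements of $\Sw$ in the class of $r$ are precisely $\omega_{(r)}, \omega_{(r)}+a_1, \omega_{(r)}+2a_1, \dots$, so an integer $n \equiv r$ lies in $\Sw$ if and only if $n \ge \omega_{(r)}$. I would first record this fact, which is immediate from $a_1 \in \Sw$ together with the defining minimality property of $\ap(\Sw)$.

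With this in hand, I count $|I_k \cap \Sw|$ directly. Each interval $I_k = [ka_1, (k+1)a_1-1]$ meets every residue class modulo $a_1$ in exactly one integer, namely $ka_1 + r$ for class $r$. By the previous paragraph this representative lies in $\Sw$ if and only if $ka_1 + r \ge \omega_{(r)}$, that is, if and only if $k \ge \lfloor \omega_{(r)}/a_1 \rfloor$. Summing over residue classes and reindexing by the increasing enumeration $\omega_0 < \omega_1 < \cdots < \omega_{a_1-1}$ gives the clean formula
$$
|I_k \cap \Sw| = \big|\{\, i : \lfloor \omega_i/a_1 \rfloor \le k \,\}\big|.
$$
Since the $\omega_i$ are increasing, the floors $\lfloor \omega_i/a_1 \rfloor$ are non-decreasing, so $|I_k \cap \Sw| = j$ holds precisely for the integers $k$ with $\lfloor \omega_{j-1}/a_1 \rfloor \le k < \lfloor \omega_j/a_1 \rfloor$. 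Counting these $k$ yields $\eta_j = \lfloor \omega_j/a_1 \rfloor - \lfloor \omega_{j-1}/a_1 \rfloor$, which is part (2). The argument automatically accommodates ties (two Apéry elements in a common interval simply give $\eta_j = 0$), and finiteness of $\eta_j$ for $j \le a_1-1$ follows because $|I_k \cap \Sw| = a_1$ once $k \ge \lfloor \omega_{a_1-1}/a_1 \rfloor$.

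For part (1) I would compare the two ranges of summation defining $\eta_j$ and $\epsilon_j$. Because $F$ is the Frobenius number, every integer exceeding $F$ lies in $\Sw$; as $F \in I_Q$ while all of $I_{Q+1}, I_{Q+2}, \dots$ sit strictly above $F$, these later intervals satisfy $|I_k \cap \Sw| = a_1$. Hence for any $j \le a_1-1$ no interval $I_k$ with $k \ge Q+1$ contributes to $\eta_j$, so $\eta_j$ counts exactly the same intervals as $\epsilon_j$ except that it also admits $k = Q$. Since $F \in I_Q \setminus \Sw$ forces $|I_Q \cap \Sw| \le a_1-1$, the interval $I_Q$ is counted by $\eta_j$ precisely when $j = |I_Q \cap \Sw|$, and part (1) follows.

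I expect the only genuine care to lie in part (2): one must verify the monotonicity of the floors and confirm that the telescoping count remains valid even when several Apéry elements fall in a common interval, and one should check that restricting attention to $j \le a_1-1$ keeps every quantity finite. The remainder is routine bookkeeping around the single boundary interval $I_Q$ containing the Frobenius number.
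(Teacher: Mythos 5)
Your proof is correct. Note that the paper does not prove this lemma at all: it is quoted from \cite{Sammartano} (results 12 and 13 there), so there is no in-paper argument to compare against. Your route is the natural one and the details check out: the reduction to the fact that $n \in \Sw$ iff $n$ dominates the Ap\'ery element of its residue class, the resulting identity $|I_k \cap \Sw| = |\{ i : \lfloor \omega_i/a_1 \rfloor \le k \}|$, the telescoping count giving part (2) (correctly handling the case $\eta_j = 0$ when consecutive Ap\'ery elements share an interval), and the observation that the only interval counted by $\eta_j$ but not $\epsilon_j$ is $I_Q$, since $|I_k \cap \Sw| = a_1$ for $k \ge Q+1$ while $Qa_1 \in I_Q \cap \Sw$ and $F \in I_Q \setminus \Sw$ force $1 \le |I_Q \cap \Sw| \le a_1 - 1$.
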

The numbers $\epsilon_j$ give rise to an equivalent formulation of (\ref{Wilf's Inequality}):
\begin{proposition}[\cite{Sammartano}, 10, 11]\label{Proposition Expression Difference} 
We have the equation
$$
d(F+1-g) - (F+1)=
\sum_{j=1}^{a_1-1} \epsilon_j(jd-a_1)+(n_Q d-R) = : \Delta.
$$
\end{proposition}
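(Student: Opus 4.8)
The plan is to read both sides of the asserted identity as the result of counting the elements of $\Sw$ inside $[0,F]$ in two different ways, and then to match terms. The starting point is the identity $\big|\Sw \cap [0,F]\big| = F+1-g$: the interval $[0,F]$ contains $F+1$ integers, exactly $g$ of which are gaps (every gap is a positive integer bounded above by $F$, while $0 \in \Sw$), so the number of semigroup elements there is $F+1-g$. I would then split $[0,F]$ along the windows $I_k$, writing $[0,F] = \big( \bigcup_{k=0}^{Q-1} I_k \big) \cup [Qa_1, F]$, since $F = Qa_1 + R - 1$ lies in $I_Q$. This gives
\[
F+1-g = \sum_{k=0}^{Q-1} \big| I_k \cap \Sw \big| + n_Q .
\]

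The crux is to control the counts $\big|I_k \cap \Sw\big|$ for $0 \le k \le Q-1$, and I expect this to be the main obstacle, since it is where the defining property of $F$ enters. On one hand, $ka_1 \in \Sw$ always lies in $I_k$, so $\big|I_k \cap \Sw\big| \ge 1$. On the other hand, if some $I_k$ with $k \le Q-1$ were entirely contained in $\Sw$, then $a_1$ consecutive integers starting at $ka_1$ would be representable, and adding multiples of $a_1$ would force every integer $\ge ka_1$ into $\Sw$, contradicting $F > Qa_1 > ka_1$. Hence $1 \le \big|I_k \cap \Sw\big| \le a_1 - 1$ for all $k \le Q-1$. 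Grouping these $Q$ windows according to the value $j = \big|I_k \cap \Sw\big| \in \{1, \ldots, a_1-1\}$ then produces two elementary counting identities: summing the cardinalities gives $\sum_{j=1}^{a_1-1} j\,\epsilon_j = \sum_{k=0}^{Q-1} \big|I_k \cap \Sw\big|$, while counting the windows themselves gives $\sum_{j=1}^{a_1-1} \epsilon_j = Q$.

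Combining these facts finishes the proof by direct substitution, with no further structural input required. From the first counting identity and the displayed decomposition I obtain $\sum_{j=1}^{a_1-1} j\,\epsilon_j + n_Q = F+1-g$; multiplying by $d$ and subtracting $F+1 = Qa_1 + R$ yields
\[
d(F+1-g) - (F+1) = d\Big(\sum_{j=1}^{a_1-1} j\,\epsilon_j + n_Q\Big) - Qa_1 - R .
\]
Finally I replace $Qa_1$ by $a_1 \sum_{j=1}^{a_1-1} \epsilon_j$ using the second identity and regroup to reach
\[
d(F+1-g) - (F+1) = \sum_{j=1}^{a_1-1} \epsilon_j (jd - a_1) + (n_Q d - R) = \Delta,
\]
which is exactly the claimed equation. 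The only genuinely nontrivial ingredient is the pair of cardinality bounds on $\big|I_k \cap \Sw\big|$; everything else is bookkeeping, though one must handle the boundary window $I_Q$ separately through $n_Q$ and the remainder $R$, which is precisely the source of the discrepancy between $\eta_j$ and $\epsilon_j$ recorded in Lemma \ref{Lemma Properties of Eta and Epsilon}.
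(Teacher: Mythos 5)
Your proof is correct, and it takes essentially the same route as the argument in \cite{Sammartano} that the paper cites for this proposition (the paper itself only quotes the result): double-count $\Sw \cap [0,F]$ over the windows $I_k$, note that $1 \le |I_k \cap \Sw| \le a_1 - 1$ for $0 \le k \le Q-1$ (the lower bound from $ka_1 \in \Sw$, the upper bound since a full window would force $F \in \Sw$), and regroup the sum $\sum_{k=0}^{Q-1}(|I_k \cap \Sw|\,d - a_1) + (n_Q d - R)$ by the value $j = |I_k \cap \Sw|$. No gaps to report.
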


\section{Proof of the main theorem}

In this section we  prove Theorem \ref{Main Theorem} using Proposition \ref{Proposition Expression Difference}. 
We need a series of lemmas first. 

\begin{lemma}\label{Lemma with X}
If  $x \in \mathbb{N}\setminus\{0\}$ is smaller than every prime factor of $a_1$ then 
$  \lfloor \frac{\omega_{x}}{a_1}  \rfloor \leq x  \lfloor \frac{\omega_{1}}{a_1}  \rfloor+x -1 $.
\end{lemma}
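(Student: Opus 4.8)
The plan is to bound $\omega_x$ from above by exhibiting $x$ explicit elements of $\ap(\Sw)$ that all lie in low-indexed intervals $I_k$, built from the multiples of $\omega_1=a_2$. As context, note that by Lemma~\ref{Lemma Properties of Eta and Epsilon}(2) one has $\lfloor \omega_1/a_1\rfloor=\eta_1$ and, telescoping, $\lfloor \omega_x/a_1\rfloor=\sum_{j=1}^x \eta_j$; but rather than estimate the individual $\eta_j$, I would instead locate $x$ distinct nonzero Apéry elements and use the fact that $\omega_x$ cannot exceed the largest of any $x$ of them.

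First I would consider the integers $a_2,2a_2,\ldots,xa_2$ and examine their residues modulo $a_1$. Here the hypothesis that $x$ is smaller than every prime factor of $a_1$ is used: it guarantees $\gcd(k,a_1)=1$ for $1\le k\le x$ and $\gcd(k'-k,a_1)=1$ for $1\le k<k'\le x$. Since $a_2=\omega_1\not\equiv 0\pmod{a_1}$, these coprimality facts force the residues $a_2,2a_2,\ldots,xa_2 \bmod a_1$ to be pairwise distinct and all nonzero. Consequently the unique Apéry elements $\omega^{(1)},\ldots,\omega^{(x)}$ representing these residue classes are $x$ distinct nonzero elements of $\ap(\Sw)$.

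Next I would bound each $\omega^{(k)}$. Because $\omega^{(k)}$ is the smallest element of $\Sw$ in its residue class and $ka_2\in\Sw$, we have $\omega^{(k)}\le ka_2$. Writing $a_2=\eta_1 a_1+r$ with $1\le r\le a_1-1$ gives $\lfloor ka_2/a_1\rfloor\le k\eta_1+(k-1)$, so by monotonicity $\lfloor \omega^{(k)}/a_1\rfloor\le x\eta_1+x-1$ for every $k\le x$. Finally, since $\omega_x$ is by definition the $x$-th smallest nonzero element of $\ap(\Sw)$ while $\omega^{(1)},\ldots,\omega^{(x)}$ are $x$ distinct nonzero elements, a pigeonhole comparison forces $\omega_x\le \max_k \omega^{(k)}$; applying $\lfloor \cdot/a_1\rfloor$ yields $\lfloor \omega_x/a_1\rfloor\le x\eta_1+x-1=x\lfloor \omega_1/a_1\rfloor+x-1$, as claimed.

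The only real content is the number-theoretic step: the assumption on the prime factors of $a_1$ is exactly what makes the residues of the first $x$ multiples of $a_2$ distinct, which in turn lets these $x$ multiples account for the first $x$ nonzero Apéry elements. Everything else is the monotonicity of $\lfloor \cdot/a_1\rfloor$ together with the inequality $\omega_x\le \max_k \omega^{(k)}$, so I expect no further obstacle.
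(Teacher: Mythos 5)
Your proof is correct and follows essentially the same route as the paper: both arguments use the hypothesis on the prime factors of $a_1$ to show that $\omega_1, 2\omega_1, \ldots, x\omega_1$ lie in pairwise distinct nonzero residue classes modulo $a_1$, deduce that the $x$-th smallest nonzero Ap\'ery element satisfies $\omega_x \le x\omega_1$ (your $\max_k \omega^{(k)} \le x a_2$ is the same bound), and finish with the floor-function estimate $\lfloor x\omega_1/a_1\rfloor \le x\lfloor \omega_1/a_1\rfloor + x-1$. The only difference is bookkeeping: the paper takes the floor once of $x\omega_1$, while you bound each class representative separately and take a maximum.
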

\begin{proof}
The assumption on $x$ implies that 
$i\omega_1 \not \equiv j\omega_1 \pmod{a_1}$ for any $0 \leq i < j \le x$:
in fact  $i\omega_1 \equiv j\omega_1 \pmod{a_1}$ yields the contradiction $(j-i)\omega_1 \equiv 0 \pmod{a_1}$ as $\gcd( j-i, a_1) = 1$ and $\omega_1 \not\equiv 0 \pmod{a_1}$.
 Thus the subset $\{\omega_1, 2\omega_1, \ldots, x\omega_1 \} \subseteq \Sw$ covers $x$ different residue classes modulo $a_1$,
  hence there are elements of at least $x$ different classes less than or equal to $x\omega_1$ in $\Sw$. 
 By definition of Ap\'ery set we deduce that $\omega_x \le x\omega_1$ and  $ \lfloor \frac{\omega_{x}}{a_1} \rfloor \le \lfloor \frac{x\omega_{1}}{a_1} \rfloor \le x \lfloor \frac{\omega_{1}}{a_1} \rfloor+x-1$.
\end{proof}

\begin{lemma}\label{Lemma With Y}
If $y\in \mathbb{N}$ satisfies $y \geq 2$ and
  $a_1-d \geq {y\choose 2}+1 $ then 
 $\omega_{a_1-1} \ge \omega_y + \omega_1$. 
In particular,  $ \lfloor \frac{\omega_{a_1-1}}{a_1}  \rfloor \ge  \lfloor \frac{\omega_{y}}{a_1}  \rfloor + \lfloor \frac{\omega_{1}}{a_1} \rfloor$ and $F>\omega_y$.
\end{lemma}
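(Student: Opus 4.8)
The two ``in particular'' assertions are routine consequences of the main inequality $\omega_{a_1-1}\ge \omega_y+\omega_1$: since $\lfloor (A+B)/a_1\rfloor \ge \lfloor A/a_1\rfloor+\lfloor B/a_1\rfloor$ always, the floor estimate is immediate, and because $\omega_{a_1-1}=F+a_1$ while $\omega_1=a_2>a_1$ we get $F=\omega_{a_1-1}-a_1\ge \omega_y+(a_2-a_1)>\omega_y$. So the entire content lies in proving $\omega_{a_1-1}\ge \omega_y+\omega_1$, and the plan is to argue the contrapositive: assuming that every element of $\ap(\Sw)$ is strictly smaller than $\omega_y+\omega_1$, I will show $a_1-d\le \binom{y}{2}$, contradicting the hypothesis.

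The starting point is the observation that $a_1-d$ counts exactly the \emph{non-minimal} elements of the poset $(\ap(\Sw)\setminus\{0\},\preceq)$: there are $a_1-1$ nonzero Apéry elements and, by the description recalled in the preliminaries, precisely $d-1$ minimal ones, namely the generators $a_2,\dots,a_d$. Thus it suffices to bound the number of such non-atoms by $\binom{y}{2}$ under the standing assumption $\omega_{a_1-1}<\omega_y+\omega_1=\omega_y+a_2$.

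For each non-atom $\omega$ I would use that $\omega$, being an Apéry element, has only representations avoiding $a_1$, and being non-minimal it is not a single generator; hence $\omega=\sum_{i\ge 2}\lambda_i a_i$ with $\sum_i\lambda_i\ge 2$, and I may write $\omega=\omega'+a_k$ for some generator $a_k$ ($k\ge 2$) and $\omega'=\omega-a_k\in\Sw\setminus\{0\}$. The downward-closure property quoted earlier ($\tau\preceq\omega\in\ap(\Sw)\Rightarrow\tau\in\ap(\Sw)$) then forces $\omega'\in\ap(\Sw)$. The crucial step is to trap \emph{both} summands among the $y-1$ smallest positive Apéry elements: from $\omega<\omega_y+a_2$ together with $a_k\ge a_2$ one gets $\omega'=\omega-a_k<\omega_y$, while the bound $\omega'\ge\omega_1=a_2$ gives $a_k=\omega-\omega'<\omega_y$ as well. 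Since the positive Apéry elements below $\omega_y$ are exactly $\omega_1,\dots,\omega_{y-1}$, both $\omega'$ and $a_k$ lie in the $(y-1)$-element set $V=\{\omega_1,\dots,\omega_{y-1}\}$.

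To finish, note that each non-atom is thereby realized as a sum of two elements of $V$, so the set of non-atoms injects into $\{s+t:s,t\in V\}$; as the number of size-two multisets drawn from a set of size $y-1$ equals $\binom{y}{2}$, there are at most $\binom{y}{2}$ non-atoms, i.e. $a_1-d\le\binom{y}{2}$, which is the desired contradiction. I expect the main obstacle to be precisely the double bound in the previous paragraph — establishing that not only the cofactor $\omega'$ but also the removed generator $a_k$ is forced below $\omega_y$. This is exactly what confines the decomposition to pairs from a $(y-1)$-element set and reproduces the binomial coefficient $\binom{y}{2}$; everything else is bookkeeping.
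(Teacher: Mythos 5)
Your proof is correct and is essentially the paper's own argument in contrapositive form: both hinge on counting the $a_1-d\ge\binom{y}{2}+1$ non-minimal elements of $(\ap(\Sw)\setminus\{0\},\preceq)$, decomposing each as a sum of two nonzero Ap\'ery elements, and playing this count against the at most $\binom{y}{2}$ pairwise sums drawn from $\{\omega_1,\dots,\omega_{y-1}\}$. The paper runs the pigeonhole directly (some non-minimal $\tau$ avoids the set of such sums, forcing its larger summand to be at least $\omega_y$, whence $\omega_{a_1-1}\ge\tau\ge\omega_1+\omega_y$), while you assume the conclusion fails and trap both summands below $\omega_y$; the two arguments are logically the same.
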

\begin{proof}
Since the minimal elements in  $(\ap(\Sw)\setminus\{0\}, \preceq)$ are exactly $\{a_2, \ldots, a_d\}$ and $|\ap(\Sw)|= a_1$, there are  at least ${y\choose 2}+1$ non-minimal elements in  $(\ap(\Sw)\setminus\{0\}, \preceq)$.
The set  $\mathcal{Y}=\{ \omega_i+\omega_j \ | \ 1 \le i \le j \le y-1 \}$
contains at most ${y\choose 2}$ distinct elements, 
so there exists $\tau \in \ap(\Sw) \setminus (\{0, a_2, \ldots, a_d\} \cup \mathcal{Y})$. 
In particular $\tau$ is not minimal, 
thus  $\tau= \omega_h + \omega_k$ for some $1 \leq h \leq k$, 
and $\tau\notin \mathcal{Y}$ yields $k \geq y$.
 Finally we have 
 $\omega_{a_1-1} \ge \tau = \omega_h + \omega_k \geq \omega_1 + \omega_y$.
The inequality with the floor function follows immediately, 
and from $\omega_1 > a_1$ we obtain $\omega_{a_1-1} = F + a_1 \geq \omega_y + \omega_1 > \omega_y +a_1$ and $F > \omega_y$. 
\end{proof}

\begin{lemma}\label{Lemma with NQ}
If $y, z\in \mathbb{N}$ satisfy $y\geq 2$, $y\geq z$,
  $ a_1 - d \geq {y\choose 2}+1 $,  and  
$ \lfloor \frac{\omega_{a_1-1}}{a_1}  \rfloor = \lfloor \frac{\omega_z}{a_1} \rfloor + \lfloor \frac{\omega_{1}}{a_1} \rfloor$  
  then $n_Q \ge y-z+3$.
\end{lemma}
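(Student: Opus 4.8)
The plan is to combine the hypothesis with Lemma~\ref{Lemma With Y} to trap $\omega_z,\dots,\omega_y$ inside a single block $I_{k_0}$, and then to manufacture enough elements of $\Sw\cap[Qa_1,F]$ by sliding Apéry elements up to height $Q$. First I would extract the geometric consequence of the hypothesis. Since $z\le y$ gives $\lfloor\omega_z/a_1\rfloor\le\lfloor\omega_y/a_1\rfloor$, Lemma~\ref{Lemma With Y} yields
$$\lfloor\omega_z/a_1\rfloor+\lfloor\omega_1/a_1\rfloor=\lfloor\omega_{a_1-1}/a_1\rfloor\ge\lfloor\omega_y/a_1\rfloor+\lfloor\omega_1/a_1\rfloor\ge\lfloor\omega_z/a_1\rfloor+\lfloor\omega_1/a_1\rfloor,$$
forcing equality throughout; hence $\lfloor\omega_z/a_1\rfloor=\cdots=\lfloor\omega_y/a_1\rfloor=:k_0$, so $\omega_z<\cdots<\omega_y$ all lie in $I_{k_0}$ and their residues $r_j:=\omega_j-k_0a_1$ satisfy $r_z<\cdots<r_y$. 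Along the way I would record the bookkeeping I need later: $\lfloor\omega_{a_1-1}/a_1\rfloor=Q+1$ (from $\omega_{a_1-1}=F+a_1$ and $F+1=Qa_1+R$), $\lfloor\omega_1/a_1\rfloor=Q+1-k_0$, and $k_0\ge1$ because $\omega_z\ge\omega_1>a_1$, whence $1\le\lfloor\omega_1/a_1\rfloor\le Q$; and the residue inequality $r_1+r_y\le R-1$, read off from $\omega_1+\omega_y\le\omega_{a_1-1}$ inside $I_{Q+1}$.

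Next I would reformulate $n_Q$ as a count of residues. Since every element of $\Sw$ in a fixed class is its Apéry representative plus a multiple of $a_1$, one has $Qa_1+s\in\Sw$ if and only if the Apéry representative of the class of $s$ sits at height $\le Q$, and $n_Q$ is exactly the number of $s\in\{0,\dots,R-1\}$ with $Qa_1+s\in\Sw$. It therefore suffices to exhibit $y-z+3$ distinct such residues. Using $k_0\ge1$ and $\lfloor\omega_1/a_1\rfloor\le Q$, sliding $\omega_1$ and each $\omega_j$ ($z\le j\le y$) up to height $Q$ shows $Qa_1+r_1\in\Sw$ and $Qa_1+r_j\in\Sw$, and together with $Qa_1=Q\,a_1\in\Sw$ this produces the residues $0,r_1,r_z,r_{z+1},\dots,r_y$, each at most $R-1$ (as $r_j\le r_y\le R-1-r_1$ and $r_1\le R-1$).

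When $z\ge2$ the value $r_1$ differs from every $r_z,\dots,r_y$ (distinct Apéry classes), so these are $y-z+3$ distinct residues and the lemma follows. The one case that resists this clean count is $z=1$ (where $2k_0=Q+1$): then $r_1=r_z$ collapses two residues and the argument delivers only $y-z+2$ elements, so producing the final one is the crux. Here I would examine the sums $\omega_1+\omega_j$ for $z\le j\le y$: each lies in $I_{Q+1}$ and is $\le\omega_{a_1-1}$, with residue $r_1+r_j$, and the choice $j=y$ gives a residue $r_1+r_y>r_y$ lying outside $\{0,r_1,\dots,r_y\}$. If some such sum is \emph{not} an Apéry element, subtracting $a_1$ drops it into $\Sw$ at height $Q$ and supplies the missing residue $\le R-1$.

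The difficulty, and where I expect the real work to lie, is ruling out the degenerate possibility that \emph{all} these sums are Apéry elements. I would argue by contradiction: in that situation $\{r_1,\dots,r_y\}$ would be closed under adding $r_1$ except at its chain-tops (since a coincidence $r_1+r_j=r_k$ forces $\omega_1+\omega_j=\omega_k+\lfloor\omega_1/a_1\rfloor a_1\notin\ap(\Sw)$), while each chain-top would force a genuinely \emph{new} Apéry element crammed at the top height $Q+1$. Comparing this forced supply of classes at height $Q+1$ with the identity $n_Q=R-\big|\{i:\lfloor\omega_i/a_1\rfloor=Q+1\}\big|$ should then either exhibit the extra good residue directly or contradict the assumption $n_Q=y-z+2$. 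This membership bookkeeping in the case $z=1$ is the heart of the proof; everything else is the routine sliding argument above.
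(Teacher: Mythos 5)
Your core argument is essentially identical to the paper's proof. Both proceed by: using the hypothesis together with Lemma \ref{Lemma With Y} to force $\lfloor \frac{\omega_z}{a_1} \rfloor = \lfloor \frac{\omega_i}{a_1} \rfloor = \lfloor \frac{\omega_y}{a_1} \rfloor$ for all $z \le i \le y$; identifying $\lfloor \frac{\omega_{a_1-1}}{a_1} \rfloor = Q+1$ and deducing, from $\omega_1 + \omega_i \le \omega_{a_1-1} = (Q+1)a_1 + (R-1)$, the residue inequality $r_1 + r_i \le R-1$; and then sliding $\omega_1$ and each $\omega_i$ by multiples of $a_1$ into $I_Q$ to exhibit, together with $Qa_1$, elements of $\Sw \cap I_Q \cap [0,F]$ in the distinct residue classes $0, r_1, r_z, \ldots, r_y$. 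Your reformulation of $n_Q$ as a count of residues $s \in \{0,\ldots,R-1\}$ with $Qa_1+s \in \Sw$ is only a rephrasing of the paper's counting.

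Where you diverge is the case $z=1$, and there you have spotted a real (though harmless) oversight: the paper's list ``$Qa_1$, $\omega_1 + l_1a_1$, and $\omega_i + l_ia_1$ for $z \le i \le y$'' consists of $y-z+3$ \emph{distinct} elements only when $z \ge 2$; for $z=1$ the element $\omega_1+l_1a_1$ is counted twice and the argument as written yields only $n_Q \ge y+1$ rather than $y+2$. The paper does not remark on this, but it never matters: in the proof of Theorem \ref{Main Theorem} the lemma is applied with $z = \frac{\rho^2+\rho-2}{2} \ge 5$. Your partial repair is sound as far as it goes — if $\omega_1+\omega_y \notin \ap(\Sw)$, then $Qa_1 + r_1 + r_y = \omega_1+\omega_y - a_1 \in \Sw$ supplies a new residue, since $r_1+r_y \le R-1$ and $r_1 + r_y$ exceeds $r_1$ and every $r_i$ — but the remaining degenerate sub-case ($z=1$ and $\omega_1+\omega_y \in \ap(\Sw)$) is left as a sketch, with the closure/chain-top bookkeeping asserted rather than carried out. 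So, to summarize: on the cases the paper itself actually proves (and the only ones it uses), your proof matches the paper's; for $z=1$ neither your write-up nor the paper's constitutes a complete proof, and your proposal has the merit of making that boundary case explicit.
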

\begin{proof}
Fix $z \leq i \leq y$, by Lemma
\ref{Lemma With Y} we have  that $F+a_1 \ge \omega_1+\omega_y \ge \omega_1+\omega_i$ and 
$$
 \left \lfloor \frac{\omega_{a_1-1}}{a_1} \right \rfloor 
\ge 
 \left \lfloor \frac{\omega_{y}}{a_1}  \right\rfloor  +  \left \lfloor \frac{\omega_{1}}{a_1}  \right \rfloor 
\ge 
\left \lfloor \frac{\omega_{i}}{a_1} \right \rfloor + \left \lfloor \frac{\omega_{1}}{a_1}   \right\rfloor  
 \ge 
  \left \lfloor \frac{\omega_{z}}{a_1} \right \rfloor  + \left \lfloor \frac{\omega_{1}}{a_1}   \right\rfloor 
 = 
\left  \lfloor \frac{\omega_{a_1-1}}{a_1}  \right\rfloor 
 $$
  and this leads to $ \lfloor \frac{\omega_{y}}{a_1} \rfloor= \lfloor \frac{\omega_{i}}{a_1}  \rfloor= \lfloor \frac{\omega_{z}}{a_1}  \rfloor$.
 From $F+1=Q a_1+R$ we obtain  $(Q+1)a_1+(R-1) \ge \omega_1+ \omega_i$.
Dividing by $a_1$ we get $\omega_1 = q_1 a_1 + r_1$ and $\omega_i = q_i a_1 + r_i$
with $0 \le r_i \le a_1-1$ 
and it follows  that $ Q +1 = \lfloor \frac{\omega_{a_1-1}}{a_1}  \rfloor= \lfloor \frac{\omega_{z}}{a_1}  \rfloor +  \lfloor \frac{\omega_{1}}{a_1} \rfloor  =\lfloor \frac{\omega_{i}}{a_1}  \rfloor +  \lfloor \frac{\omega_{1}}{a_1}   \rfloor = q_1 + q_i$. 
From $\omega_{a_1-1} \ge \omega_i + \omega_1$ we obtain $(Q+1)a_1+(R-1) \ge (q_i+q_1)a_1+(r_i+r_1)= (Q+1)a_1+(r_i+r_1)$, 
thus
$R-1 \ge r_i+r_1$. 
In particular, $r_1, r_i < R$:
we conclude that the elements $Qa_1, \omega_1 + l_1 a_1$ and $\omega_i + l_i a_1$ belong to $\Sw \cap I_Q \cap \big[0,F\big]$ for suitable $l_1, l_i \in \mathbb{N}$ for each $z\leq i \leq y$ so that $n_Q \geq 3+ y-z$.
\end{proof}

\begin{lemma}\label{Lemma with NQ minus 1}
If $y\in \mathbb{N}$ satisfies $y \geq 2$ and
  $  a_1 - d \geq {y\choose 2}+1$ then $n_{Q-1} \ge y+2-n_Q$.
\end{lemma}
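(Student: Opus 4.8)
The plan is to reinterpret $n_{Q-1}+n_Q$ as the number of semigroup elements in a single interval and then to populate that interval with $y+2$ explicit elements. First I would record that $Q\ge 1$: by Lemma~\ref{Lemma With Y} we have $F>\omega_y\ge\omega_1=a_2>a_1$, so $Qa_1=F+1-R\ge F+1-a_1>1$, whence $Q\ge 1$ and the interval $[(Q-1)a_1,F]$ is meaningful. Since $I_{Q-1}=[(Q-1)a_1,Qa_1-1]\subseteq[0,F]$ (because $Qa_1-1<F$), we have $n_{Q-1}=|\Sw\cap I_{Q-1}|$, while $n_Q=|\Sw\cap[Qa_1,F]|$ directly from the definition (as $F\le(Q+1)a_1-1$). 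Because $[(Q-1)a_1,F]=I_{Q-1}\sqcup[Qa_1,F]$, the claimed inequality $n_{Q-1}\ge y+2-n_Q$ is equivalent to $\big|\Sw\cap[(Q-1)a_1,F]\big|\ge y+2$.

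Next I would produce the $y+2$ elements. The crucial input is again Lemma~\ref{Lemma With Y}, which yields $\omega_i\le\omega_y<F$ for every $1\le i\le y$. Fix such an $i$ and set $r_i=\omega_i-a_1\lfloor\omega_i/a_1\rfloor$. I claim the residue class $r_i$ contains an element of $\Sw$ inside $[(Q-1)a_1,F]$. Indeed, if $\omega_i\ge(Q-1)a_1$ then $\omega_i$ itself lies in $[(Q-1)a_1,F]$ since $\omega_i\le F$; otherwise $\lfloor\omega_i/a_1\rfloor\le Q-2$ and the element $\omega_i+\big(Q-1-\lfloor\omega_i/a_1\rfloor\big)a_1\in\Sw$ lies in $I_{Q-1}\subseteq[(Q-1)a_1,F]$. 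In either case we obtain one element of $\Sw\cap[(Q-1)a_1,F]$ of residue $r_i$. Because $\omega_1,\dots,\omega_y$ are distinct elements of $\ap(\Sw)$, they occupy $y$ distinct residue classes modulo $a_1$, none of which is $0$ (the only element of $\ap(\Sw)$ divisible by $a_1$ is $\omega_0=0$); hence these $y$ elements are pairwise distinct and of nonzero residue.

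Finally I would adjoin the two multiples $(Q-1)a_1$ and $Qa_1$, which lie in $\Sw$, have residue $0$, and belong to $[(Q-1)a_1,F]$ (here $Qa_1\le F$ because $R\ge 2$). Together with the previous $y$ elements of nonzero, pairwise distinct residues, these give $y+2$ distinct elements of $\Sw\cap[(Q-1)a_1,F]$, which is exactly the bound sought. The only real subtlety, and the step I would be most careful about, is the window-fitting argument of the second paragraph: it is precisely the inequality $F>\omega_y$ of Lemma~\ref{Lemma With Y} (itself powered by the hypothesis $a_1-d\ge\binom{y}{2}+1$) that forces $\omega_i\le F$ and thereby guarantees that each class $r_i$ genuinely meets $[(Q-1)a_1,F]$; without it an $\omega_i$ could exceed $F$ and contribute nothing to the count.
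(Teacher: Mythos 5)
Your proof is correct and follows essentially the same route as the paper's: both rest on Lemma~\ref{Lemma With Y} to place $\omega_0,\ldots,\omega_y$ strictly below $F$, on the fact that distinct Ap\'ery elements occupy distinct residue classes modulo $a_1$, and on translating by multiples of $a_1$ to land in the interval(s) adjacent to $F$. The only difference is bookkeeping: you rewrite $n_{Q-1}+n_Q$ as $\big|\Sw\cap[(Q-1)a_1,F]\big|$ and exhibit $y+2$ elements there (the shifted $\omega_1,\ldots,\omega_y$ plus the two multiples $(Q-1)a_1$ and $Qa_1$), while the paper notes that at most $n_Q-1$ of the $y+1$ elements $\omega_0,\ldots,\omega_y$ can lie in $I_Q$ (since $Qa_1\notin\ap(\Sw)$) and slides the remaining ones into $I_{Q-1}$.
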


\begin{proof}
By Lemma \ref{Lemma With Y} we have  $a_1<\omega_y < F$, 
whence $Q> 0$  and $\big\{\omega_0, \ldots, \omega_y\big\} \subseteq \big[0,F\big]$.
At most $n_Q-1$ of these elements belong to 
$\Sw \cap I_Q \cap \big[0,F\big]$, 
because $Qa_1  \notin \ap(\Sw)$.
Hence there are at least $y+2-n_Q$ elements $\omega_i$ smaller than $Qa_1$,
that is,
 $\omega_0 <\cdots < \omega_{y-n_Q+1} < Qa_1$:
 we conclude that  $\omega_i+l_i a_1 \in I_{Q-1}$ for suitable $l_i \in \mathbb{N}$ for each $0\leq i \leq y-n_Q+1$
 so  $n_{Q-1} \ge y+2-n_Q$.
\end{proof}

We are now ready to prove the main result.

\begin{proof}[Proof of Theorem \ref{Main Theorem}]
Let $\rho =  \lceil \frac{a_1}{d} \rceil$.
Since the cases $\rho = 1,  2$ have been solved,
we assume $\rho \geq 3$. 
Consider the integers
 $$
 y=\frac{3\rho^2-\rho-4}{2}
\qquad \text{ and } \qquad 
 z=\frac{\rho^2+\rho-2}{2}
 $$
 notice that $y \geq z \geq \rho +2$.
 We are going to make the assumption that $ a_1 \geq \frac{\rho}{\rho-2} {y \choose 2}$,
 so that the condition appearing in Lemmas \ref{Lemma With Y}, \ref{Lemma with NQ}, \ref{Lemma with NQ minus 1} is satisfied: 
 $$
a_1 - d
\geq  
(\rho -1)d+1  - d 
=
 (\rho-2)d+1
\geq
 \frac{\rho-2}{\rho}a_1+1
\geq
 {y \choose 2}+1.
$$
Our task is to prove (\ref{Wilf's Inequality}) by showing that the quantity
 $\Delta$ of Proposition \ref{Proposition Expression Difference} is non-negative;
we will actually show that $\Delta >0$.
We are going to break the summation in two parts, at the index $j=\rho$.
For the first part we use both properties of Lemma \ref{Lemma Properties of Eta and Epsilon} and obtain:
\begin{eqnarray*}
\sum_{j=1}^{\rho} \epsilon_j(jd-a_1) &\geq &
\sum_{j=1}^{\rho} \eta_j(jd-a_1) - (\rho d-a_1) = 
\sum_{j=1}^{\rho} \left( \left \lfloor \frac{\omega_j}{a_1} \right \rfloor - \left \lfloor \frac{\omega_{j-1}}{a_1}  \right\rfloor \right)(jd-a_1) - (\rho d-a_1)  \\
& = &
 \left\lfloor \frac{\omega_{\rho}}{a_1} \right \rfloor (\rho d-a_1)-d\sum_{j=1}^{\rho-1}  \left\lfloor \frac{\omega_{j}}{a_1} \right \rfloor - (\rho d-a_1)
\geq
-d\sum_{j=1}^{\rho-1}  \left\lfloor \frac{\omega_{j}}{a_1} \right \rfloor
\end{eqnarray*}
where we used that $\omega_0=0$ and $ \omega_\rho> \omega_1 > a_1$.
By the assumption on the factors of  $a_1$  we can use Lemma \ref{Lemma with X} for each $j< \rho$, 
yielding
$$
\sum_{j=1}^{\rho-1}  \left \lfloor \frac{\omega_{j}}{a_1}  \right \rfloor
\leq 
 \sum_{j=1}^{\rho-1} \left(j  \left\lfloor \frac{\omega_1}{a_1}\right  \rfloor 
+j - 1\right)
=
{\rho \choose 2 } \left\lfloor \frac{\omega_1}{a_1} \right \rfloor  + {\rho -1\choose 2 } 
$$
and therefore
$$
\sum_{j=1}^{\rho} \epsilon_j(jd-a_1) \geq 
-\frac{d(\rho^2-\rho)}{2}\left \lfloor \frac{\omega_1}{a_1} \right \rfloor -\frac{d(\rho^2 - 3\rho +2)}{2}.
$$
Moreover, since
$
\frac{1}{2}d(\rho^2-\rho)  \le   \frac{1}{2}d(\rho^2-\rho) +\rho d-a_1=  \frac{1}{2}d(\rho^2+\rho) -a_1= ( z+1) d-a_1
$
then
$$
\sum_{j=1}^{\rho} \epsilon_j(jd-a_1) \geq 
-( (z+1) d-a_1) \left\lfloor \frac{\omega_1}{a_1} \right \rfloor   -\frac{d(\rho^2 - 3\rho +2)}{2}.
$$
For the second part of the summation in $\Delta$,
as $z+1\geq \rho +3 $ we can write
$$
\sum_{j=\rho+1}^{a_1-1} \epsilon_j(jd-a_1) \ge\sum_{j=z+1}^{a_1-1} \epsilon_j(jd-a_1) \ge \sum_{j=z+1}^{a_1-1} \epsilon_j((z+1)d-a_1) =((z+1)d-a_1) \sum_{j=z+1}^{a_1-1} \epsilon_j \geq$$
$$
\ge  ((z+1)d-a_1) \left(\sum_{j=z+1}^{a_1-1} \eta_j-1\right)=
((z+1)d-a_1)
\left (  \left \lfloor \frac{\omega_{a_1-1}}{a_1} \right \rfloor- \left \lfloor \frac{\omega_{z}}{a_1} \right \rfloor-1 \right)
$$
where we used again both properties of Lemma \ref{Lemma Properties of Eta and Epsilon} in the last inequality.
Combining the two parts gives
$$
\Delta = \sum_{j=1}^{a_1-1} \epsilon_j(jd-a_1)+(n_Qd-R) \ge 
$$
$$
\left( \left \lfloor \frac{\omega_{a_1-1}}{a_1}\right  \rfloor - \left \lfloor \frac{\omega_{z}}{a_1}  \right\rfloor - \left \lfloor \frac{\omega_{1}}{a_1}  \right\rfloor- 1 \right ) ((z+1)d-a_1)
-\frac{d(\rho^2 - 3\rho +2)}{2}
+ (n_Qd-R)  =: \Pi .
$$
Since $ z \leq y$, 
by Lemma \ref{Lemma With Y} we have the inequality 
$ \lfloor \frac{\omega_{a_1-1}}{a_1}  \rfloor\geq \lfloor \frac{\omega_{z}}{a_1}  \rfloor+ \lfloor \frac{\omega_1}{a_1}  \rfloor$.
If the equality holds, we can use the bound for $n_Q$ in Lemma \ref{Lemma with NQ}
obtaining  
  $$
  \Delta\geq
\Pi
\geq 
-\big((z+1)d-a_1\big)
-\frac{d(\rho^2 - 3\rho +2)}{2}
+ ((y-z+3)d-R) 
=
$$
$$
=
d\left(y-2z-\frac{\rho^2 - 3\rho +2}{2}
+2\right)+(a_1-R)
=
d+(a_1-R)
> 0
$$
by the definitions of $y$ and $z$ and the fact that  $a_1 \geq R$.
Suppose now we have the strict inequality
$ \lfloor \frac{\omega_{a_1-1}}{a_1}  \rfloor > \lfloor \frac{\omega_{z}}{a_1}  \rfloor+ \lfloor \frac{\omega_1}{a_1}  \rfloor$.
In this case the first piece in $\Pi$ is non-negative because $(z+1)d-a_1 \geq \rho d-a_1\geq 0$, thus we can ignore it:
$$
\Pi \ge - \frac{d(\rho^2 - 3\rho +2)}{2}+ (n_Qd-R).
$$
If $n_Q \ge \frac{1}{2}(\rho^2-\rho+4)$,
then $\Delta \geq \Pi \geq \rho d - R +d\geq \rho d - a_1 +d >  0$.
Suppose finally that
 $n_Q \leq \frac{1}{2}(\rho^2-\rho+2)$. 
By Lemma \ref{Lemma with NQ minus 1} we know that
   $n_{Q-1} \ge y+2-n_Q , 
   $
i.e.    $\Sw\cap I_{Q-1}$ contains at least $y+2-n_Q$ elements, 
and in this case from $\rho \geq 3$ it follows
 $$
 y+2-n_Q\geq 
\frac{3\rho^2-\rho-4}{2} +2 -  \frac{\rho^2-\rho+2}{2}
 =
 \rho^2-1
\geq
\frac{\rho^2}{2}+\frac{3\rho}{2}-1 
\geq 
\frac{\rho^2+\rho-2}{2} +\rho
\geq
z+3. 
$$
Let $j_0 = n_{Q-1} \geq y + 2 - n_Q  \geq z + 3 \geq \rho + 1$, we have $\epsilon_{j_0} \geq 1 $ and therefore 
$$
\sum_{j = \rho+1}^{a_1-1} \epsilon_j (jd -a_1) \geq \sum_{j = z+1}^{a_1-1} \epsilon_j (jd -a_1) \geq (j_0 d -a_1) - ((z+1)d -a_1) + \sum_{z+1}^{a_1-1} \epsilon_j((z+1)d-a_1)
$$
it follows that 
$$
\Delta = \sum_{j=1}^{a_1-1} \epsilon_j(jd-a_1)+(n_Qd-R) \ge  \Pi +(y+2-n_Q-z-1)d \ge 
$$
$$
 - \frac{d(\rho^2 - 3\rho +2)}{2}-R+ (y-z+1)d
= \frac{d(\rho^2 + \rho -2 )}{2}-R\geq 
\frac{d(3\rho + 3 -2 )}{2}-R
\geq
(\rho d - R) +\frac{1}{2}
>0
$$
where we used that $\rho \geq 3$ and $\rho d \geq a_1\geq R$.

We have shown that $\Delta>0$ in each case and the theorem is thus proved.
\end{proof}

\section{Two further problems}

Whenever we have a bound, 
it is natural to ask ourselves whether the bound is sharp
and, in case it is, 
to try to characterize the instances in which the bound is achieved.
In his original paper \cite{Wilf}, 
Wilf also asked whether the equality in (\ref{Wilf's Inequality}) is attained if and only if 
$d=a_1$ and $a_i = a_1 +(i-1)$ for $i=2, \ldots, a_1$.
This is  not  the case:
a simple counterexample is $a_1=3, a_2=5$.
However, we believe that equality can only occur in two cases.

\begin{question}\label{Question Equality}
Is it true that the equality $F+1 = d (F+1-g)$ 
holds if and only if either $d = 2$ or
 $d = a_1$ and there exists $K \in \mathbb{N}$ such that $a_i = K a_1 +(i-1)$ for $i=2,\ldots, a_1$?
\end{question}
Note that in the latter case the numerical semigroup has the form 
$$
\mathcal{S}=\{0, a_1, 2a_1, \ldots,(K-1)a_1, K a_1, Ka_1 +1, Ka_1 +2, \ldots\}
$$
and the equality follows from Proposition \ref{Proposition Expression Difference}, while in the former case the equality 
was already known to Sylvester (cf. \cite[2.12]{RosalesGarciaSanchez}),
so the interesting part is the ``only if''.
We observe that, in order to show that these are the only two cases,
it  suffices to prove that either $d=2$ or $d=a_1$. 
In fact, if  $d=a_1$ and $F+1 = d(F+1-g)$ then     Proposition \ref{Proposition Expression Difference} implies the equation
$$
\Delta=\sum_{j=1}^{a_1-1} \epsilon_j(j-1)a_1+(n_Q a_1-R)=0 
$$
and since $n_Q\geq 1, a_1 \geq R$ we conclude that 
 $\epsilon_j = 0$ for every $ j \geq 2$, $n_Q=1, R=a_1$;
  it follows that $a_i = (Q+1) a_1 +(i-1)$ for $i=2,\ldots, a_1$.
  
We have verified via \texttt{GAP} (cf. \cite{NS}) that Question \ref{Question Equality} 
has an affirmative answer when $g \leq 35$.
Further evidence in its favor is given by the proof of  Theorem \ref{Main Theorem},
as in all the cases investigated therein 
(where $\rho \geq 3 $, hence $d < a_1$)
the strict inequality was actually seen to hold: 
in other words, 
we give a positive answer under the assumptions of our theorem.
\\

Finally, 
we remark that the conjecture has an interpretation in Commutative Algebra in terms of length inequalities.
Let $ \mathcal{R} = \Bbbk [[ t^{a_1}, \ldots , t^{a_d} ]] $
be the local ring of a monomial curve, 
where $\Bbbk$ is a field. 
Let $\overline{\mathcal{R}}=\Bbbk[[t]]$ be the integral closure of $\mathcal{R}$ in its field of fractions $\mathcal{Q}=\Bbbk((t))$ and
 $\mathfrak{C}=(\mathcal{R} :_\mathcal{Q} \overline{\mathcal{R}})$ be the conductor of $\mathcal{R}$ in $\overline{\mathcal{R}}$, 
 that is the largest common ideal of $\mathcal{R}$ and $\overline{\mathcal{R}}$.
Denoting by $\ell(\cdot)$ the length of an $\mathcal{R}$-module,
 the values of $\ell( \overline{\mathcal{R}}/\mathcal{R})$ and  $\ell(\mathcal{R}/\mathfrak{C})$ are both measures of the singularity of $\mathcal{R}$ and they are related by $\ell(\mathcal{R}/\mathfrak{C}) \leq \ell( \overline{\mathcal{R}}/\mathcal{R}) $, with equality holding if and only if $\mathcal{R}$ is Gorenstein.
 Under the notation of this paper, 
the embedding dimension of $\mathcal{R}$ is $\edim(\mathcal{R})=d$, 
whereas  $\ell( \overline{\mathcal{R}}/\mathfrak{C})=F+1$ and  $\ell(\mathcal{R}/\mathfrak{C})=F+1-g$ 
  (cf. \cite[II.1]{BarucciDobbsFontana}).
Therefore (\ref{Wilf's Inequality}) is equivalent to 
\begin{equation}\label{Wilf's Length Inequality}
\ell( \overline{\mathcal{R}}/\mathfrak{C}) \leq \edim(\mathcal{R}) \ell(\mathcal{R}/\mathfrak{C}).
\end{equation}
A similar well-known inequality holds in a  more general context: 
 if $\mathcal{R}$ is a 
one-dimensional Cohen-Macaulay local ring with  
 type $t(\mathcal{R})$ and such that 
the integral closure 
$\overline{\mathcal{R}}$
of $\mathcal{R}$ in its total ring of fractions is a finite $\mathcal{R}$-module, 
then
\begin{equation}\label{TypeInequality}
\ell( \overline{\mathcal{R}}/\mathfrak{C})  \leq (t(\mathcal{R})+1) \ell(\mathcal{R}/\mathfrak{C})
\end{equation}
 see e.g. \cite{BrownHerzog},  \cite{Delfino}, \cite{Matsuoka}.
It follows that  (\ref{Wilf's Length Inequality}) is satisfied if $t(\mathcal{R})<\edim(\mathcal{R})$, 
but the two invariants are unrelated in general:
J. Backelin exhibited a family of rings with $\edim(\mathcal{R})=4$ and arbitrarily large type (cf. \cite{FGH}).
It would be natural to explore 
 the form (\ref{Wilf's Length Inequality}) of Wilf's inequality for more general classes of rings,
therefore we conclude the paper with the following general question.

\begin{question}
Let $\mathcal{R}$ be a 
one-dimensional Cohen-Macaulay local ring such that the integral closure of $\mathcal{R}$ in its total ring of fractions is  a finite $\mathcal{R}$-module.
Under what assumptions do we have
$
\ell( \overline{\mathcal{R}}/\mathfrak{C}) \leq \edim(\mathcal{R}) \ell(\mathcal{R}/\mathfrak{C})$?
\end{question}

\end{document}